\numberwithin{equation}{section}
 \newtheorem{thm}{Theorem}[section]
 \newtheorem{cor}[thm]{Corollary}
 \newtheorem{prop}[thm]{Proposition}
 \theoremstyle{definition}
 \newtheorem{defn}[thm]{Definition}
 \newtheorem{example}[thm]{Example}
 \theoremstyle{remark}
 \numberwithin{equation}{section}
 \DeclareMathOperator{\Deg}{deg}
\DeclareMathOperator{\Tor}{Tor}
\DeclareMathOperator{\Reg}{reg}
\DeclareMathOperator{\A}{\alpha}
\DeclareMathOperator{\pd}{proj\,\, dim}
\newcommand{\gens}{\mathrm{Gens}}
\begin{document}
\bibliographystyle{amsplain}

\title[A note on the  regularity of products]
{A note on the  regularity of products}

\author{Seyed Hamid Hassanzadeh}

\address{Seyed Hamid Hassanzadeh:
Faculty of Mathematical Sciences and Computer, Tarbiat Moallem
University, Tehran, Iran and School of Mathematics, Institute for
Research in Fundamental Sciences (IPM), P.O. Box 19395-5746, Tehran,
Iran.}

\email{hamid@dmat.ufpe.br}


\author{Siamak Yassemi}
\address{Siamak Yassemi: Department of Mathematics, University of Tehran, Tehran, Iran,
and School of Mathematics, Institute for research in fundamental
sciences (IPM), P. O. Box 19395-5746, Tehran, Iran.}

\email{yassemi@ipm.ir}

\thanks{The research of Hassanzadeh was in part
supported by grant No. 89130112 from IPM}
\thanks{The research of Yassemi was supported
in part by a grant No. 89130214 from IPM}

\date{\today}

\keywords{Castelnuovo--Mumford regularity, Resolution}

\subjclass[2000]{13D02, 13D25.}

\date{\today}

\begin{abstract}

Let $S={\Bbb K}[x_1,\dots,x_n]$ denote a polynomial ring over a field $\Bbb K$. Given a monomial ideal $I$ and a finitely generated multigraded $M$ over $S$, we follow Herzog's method to construct a multigraded free $S$-resolution of $M/IM$ by using multigraded $S$-free resolutions of $S/I$ and $M$. The complex constructed in this paper is used to prove the inequality $\Reg(IM)\leq \Reg(I)+\Reg(M)$ for a large class of ideals and modules. In the case where $M$ is an ideal, under one relative condition on the generators which specially does not involve the dimensions, the inequality $\Reg(IM)\leq \Reg(I)+\Reg(M)$ is proven.
\end{abstract}

\maketitle

\section*{Introduction}
Throughout this paper $S={\Bbb K}[x_1,\dots,x_n]$ is a polynomial ring over a field $\Bbb K$. The Castelnuovo-Mumford regularity, $\Reg(M)$, is one of the most important invariants
of a finitely generated graded module $M$ over a polynomial ring $S$. Despite in  general  the regularity of a module can be doubly exponential
in the degrees of the minimal generators and in the number of the variables, \cite{CS} and \cite{MM}, there are several descriptions of the regularity of sum, intersection and products of ideals in term of each factor. A look on the  enormous works in this topic, for example \cite{Chan},\cite{CMT}, \cite{S},\cite{T},\cite{EHU},\cite{CoHe} shows the importance of finding a neat formula for the regularity of a combination of two ideals.

 Let $I$ and $J$ be two monomial ideals of $S$ and let ${\Bbb F}$ and ${\Bbb G}$ be the multigraded free $S$-resolutions of $S/I$ and $S/J$. 
 In \cite{He} Herzog constructs a multigraded free $S$-resolution of $S/(I+J).$ This resolution generalizes the Taylor resolution \cite{Ta}. 
 The complex constructed in this way is used to generalize results on the Castelnuovo-Mumford regularity that were obtained for square-free monomial 
 ideals by G. Kalai and R. Meshulam \cite{KM}.
  More precisely, Herzog declares the expected formula for the sum and intersection of monomial ideals $I$, $J$ of the polynomial ring $S$,
$$\Reg(I+J)\leq \Reg(I)+\Reg(J)-1,$$
$$\Reg(I\cap J)\leq \Reg(I)+\Reg(J).$$

The problem on the regularity of products of homogeneous ideals, even monomial ideals, is more complicated. There are several counterexamples, \cite{St}, \cite{T}, \cite{CoHe}, which show that the inequality $\Reg(IJ)\leq \Reg(I)+\Reg(J)$  does not hold in general. The regularity of two ideals or an ideal and an $R$-module is related to the regularity of tensor product of two modules, the work started by Sidman \cite{S} and continued by Conca and Herzog \cite{CoHe} who showed that $\Reg(IM)\leq \Reg(I)+\Reg(M)$ for a finitely generated graded $R$-module $M$ and a homogeneous ideal $I$ in the case where $\dim(S/I)\leq 1$. In \cite{Ca} Caviglia showed that $\Reg(M\otimes N)\leq \Reg(M)+\Reg(N) $ whenever $\dim(\Tor_1^S(M,N))\leq 1$, the regularity of Tor modules was subsequently studied in detail by Eisenbud, Huneke, and Ulrich in\cite{EHU}.

The aim of this paper is to determine some cases in which the inequality $\Reg(IM)\leq \Reg(I)+\Reg(M)$ is valid.
By changing the point of view, instead of considering the codimension of the homogeneous ideal $I$ or $\dim(\Tor_1^S(S/I,M))$, a relation between the variables participate in the minimal generating set of $I$ and those correspond to the minimal generating set of $M$ is studied.

For a homogeneous ideal $I$ (resp. a finitely generated multigraded $S$-module $M$) we define $\gens(I)$ (resp. $\gens(M)$) to be the variables participate in the minimal generating set of $I$ (resp. in the degrees of the minimal generating set of $M$).
Using the techniques in \cite{He}, it is shown that in the case where $I$ is a monomial ideal and  $\gens(I) \cap \gens(M)= \emptyset$ the Herzog's complex (generalized Taylor complex) provides a free resolution for $M/IM$ in term of those of $S/I$ and $M$. This resolution in turn shows that the inequality $\Reg(IM)\leq \Reg(I)+\Reg(M)$  is valid in this case (see Theorem \ref{Tmain}). For two homogeneous ideals $I$ and $J$ of $S$, we show that the condition $\gens(I) \cap \gens(J)= \emptyset$ implies that $I\cap J=IJ$. This is the case where the inequality of the regularity was already known. Trying to extend the desired inequality for ideals, it is shown that if $I$ and $J$ are two homogeneous (not necessarily monomial) ideals  in which  $|\gens(I) \cap \gens(J)|\le 1$, then $\Reg(IJ)\leq \Reg(I)+\Reg(J)$ (see Theorem \ref{Tregproduc}). Finally, an already known example of Conca and Herzog \cite[2.1]{CoHe} shows that the inequality $\Reg(IJ)\leq \Reg(I)+\Reg(J)$ is no longer generally valid if $|\gens(I) \cap \gens(J)|\ge 2$ (see Example\ref{Ecounter}).


\section{Main results}
Throughout $\Bbbk$ is a field and $S=\Bbbk[x_1,\cdots,x_n]$ is a polynomial ring, $M$ is a finitely generated multigraded ($\mathbb N^n$-graded) $S$-module. In his technical paper Herzog \cite{He} defines a new product between free $S$-modules. For the sake of a ready to hand definition we restate the construction of this product.

For a homogeneous element $m \in M$ of degree $(a_1,\cdots, a_n) \in \mathbb N^n$  the unique monomial in $S$ which has the same degree as $m$ is denoted by $u_m$. We define the set of gens of $M$, $\gens(M)$, as the set of $x_i \in  \{x_1,\cdots,x_n\}$ such that $x_i$ divides some $u_m$ where $m$ is a member of a minimal generating set of $M$. In addition, $\gens(M)=\emptyset$, if $M$ is generated by elements of degree zero.

\begin{defn}Let $F$ and $G$ be free $S$-modules with homogeneous basis $B$ and $C$, respectively. The *-product of $F$ and $G$, $F*G$ is the multigraded free $S$-module with a basis given by the symbols $f*g$ where $f \in B$ and $g \in C$, the multidegree of $f*g$ is defined to be $[u_f, u_g]$, the least common multiple of $u_f$ and $u_g$.
\end{defn}
Comparing to the ordinary tensor product, $F\bigotimes G$ is a free $S$-module with the basis $f\otimes g$ where $f \in B$ and $g \in C$ and $\Deg(f\otimes g)= \Deg(u_fu_g)$. Hence, $F*G$ and $F\bigotimes G$ are free $S$-modules of a same rank. Keeping in mind that $S$ is a domain and the set $\{f*g: f\in B~~\text{and}~~g\in C\}$ is a basis for $F*G$, one can see that the homogeneous multigraded map $j: F\bigotimes G\to F*G$ given by $j(f\otimes g)=\text{gcd}(u_f,u_g)f*g$ is a monomorphism.

A homogeneous multigraded map $\varphi$ of free $S$-modules $F$ and  $G$ with bases $B$ and $C$ is generally defined by $\phi(f)=\sum\limits_{g\in C}a_{\textit{\tiny{fg}}}u_{\textit{\tiny{fg}}}g$ where $f\in B$, $a_{\textit{\tiny{fg}}}$ is a member of the field $\Bbbk$ and $u_{fg}= u_f/u_g$ provided $u_f/u_g\in S$, otherwise $u_{fg}=0$.

Let $H$ be another free $S$-module with homogeneous basis $D$, the map $\varphi * \text{Id}(H): F*H\to G*H$ is defined by $$\varphi * \text{Id}(H)(f*h)=\sum\limits_{g \in C}a_{\textit{\tiny{fg}}}u_{\textit{\tiny{hfg}}}g*h,$$
where $f \in B$ and $h \in D$ and for any monomials $x,y,z$, $u_{\textit{\tiny{xyz}}}=[u_x,u_y]/[u_x,u_z]$. As well, the map $ \text{Id}(H)*\varphi: H*F\to H*G$ is defined by
$$\text{Id}(H)*\varphi(h*f)=\sum\limits_{g \in C}a_{\textit{\tiny{fg}}}u_{\textit{\tiny{hfg}}}h*g.$$

Now, let $(F_{\bullet},\varphi)$ and $(G_{\bullet},\psi)$ be two multigraded complexes of free $S$-modules. We define the complex $(F_{\bullet}*G_{\bullet}, \A)$ as follows: $(F_{\bullet}*G_{\bullet})_i=\bigoplus_{j+k=i}F_j*G_k$ and the differential $\A_i:(F_{\bullet}*G_{\bullet})_i\to (F_{\bullet}*G_{\bullet})_{i-1}$ satisfies the equation $$\A_i=\sum_{j+k=i}\varphi_j*\text{Id}(G_k)+(-1)^j\text{Id}(F_j)*\psi_k.$$

We are now ready to state and prove our main theorem. This theorem provides a free resolution for a product of a monomial ideal and a multigraded module in term of their given  free resolutions. This theorem encompasses the previous known result on the resolution of the product of monomial ideals.


\begin{thm}\label{Tmain} Let $I$ be a monomial multigraded ideal of $S$ and $M$ be a finitely generated multigraded $S$-module such that $\gens(I)\cap\gens(M)=\emptyset$. Let $F_{\bullet}$ and $G_{\bullet}$ be the minimal multigraded free resolutions of $S/I$ and $M$, respectively. Then $F_{\bullet}*G_{\bullet}$ is a multigraded free resolution of $M/IM$.
\end{thm}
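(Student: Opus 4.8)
The plan is to mimic Herzog's argument from \cite{He}, where the analogous statement is proved for $S/(I+J)$ using resolutions of $S/I$ and $S/J$, and to isolate exactly the place where the hypothesis $\gens(I)\cap\gens(M)=\emptyset$ is needed. First I would record the basic bookkeeping: since $F_\bullet \ast G_\bullet$ and $F_\bullet \otimes G_\bullet$ have the same ranks in each homological degree, the natural monomial-scaling map $j\colon F_\bullet\otimes G_\bullet \to F_\bullet\ast G_\bullet$ described before the theorem is a degreewise monomorphism of complexes; one should check it actually commutes with the differentials (the tensor differential is $\varphi_j\otimes\mathrm{Id}+(-1)^j\mathrm{Id}\otimes\psi_k$, and the scaling factors match up because of the elementary identities relating $\gcd$, $\mathrm{lcm}$ and the $u_{xyz}$ symbols). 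Then $H_0(F_\bullet\ast G_\bullet) = \coker(\A_1)$ must be identified with $M/IM$: the degree-zero piece is $F_0\ast G_0$, whose single generator in the $S/I$-part sits in multidegree $[u_f,u_g]=u_g$ since $u_f=1$, so $F_0\ast G_0\cong G_0$ as multigraded modules, and the image of $\A_1$ is generated by the relations coming from $\varphi_1$ (the generators of $I$ times $G_0$) together with those coming from $\psi_1$ (the syzygies of $M$); this gives $M/IM$ on the nose. So the whole content is exactness in positive homological degrees.

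For acyclicity I would argue by localization / mapping-cone induction, or more efficiently by the following direct comparison. The disjointness hypothesis is what makes the $\ast$-product behave like an honest tensor product: if $\gens(I)\cap\gens(M)=\emptyset$, then for every basis element $f$ occurring in $F_\bullet$ (whose multidegree $u_f$ is a monomial in the variables $\gens(I)$, after possibly enlarging by the variables dividing generators of $I$) and every basis element $g$ occurring in $G_\bullet$ (whose multidegree $u_g$ is supported on $\gens(M)$), we have $\gcd(u_f,u_g)=1$ and hence $[u_f,u_g]=u_fu_g = \Deg(u_f u_g)$. Consequently the scaling map $j$ is an \emph{isomorphism} of multigraded complexes, not merely a monomorphism: $j(f\otimes g)=\gcd(u_f,u_g)\,f\ast g = f\ast g$, and the differentials of $F_\bullet\ast G_\bullet$ and $F_\bullet\otimes G_\bullet$ literally agree under this identification (again the $u_{xyz}$ symbols collapse because the relevant $\mathrm{lcm}$'s factor as products). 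Therefore $F_\bullet\ast G_\bullet \cong F_\bullet\otimes_S G_\bullet$.

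It then remains to show that $F_\bullet\otimes_S G_\bullet$ is a resolution of $M/IM = S/I\otimes_S M$, i.e. that $\Tor_i^S(S/I,M)=0$ for $i>0$. This is where the disjointness is used a second time, now at the level of rings: $S = \Bbbk[x_1,\dots,x_n]$ factors as $S = A\otimes_\Bbbk B$ with $A=\Bbbk[\,\gens(I)\,]$ (and the variables dividing generators of $I$) and $B=\Bbbk[\,\text{the remaining variables}\,]$, in such a way that $I = I_A\otimes_\Bbbk B = I_AS$ for an ideal $I_A\subseteq A$, while $M$, being generated in degrees supported away from $\gens(I)$, is extended from a $B$-module $M_B$ — more precisely one shows $F_\bullet$ may be taken with all differentials having entries in $A$ and $G_\bullet$ with all entries in $B$. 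A flat base-change / Künneth argument over $\Bbbk$ then gives $\Tor_i^S(S/I,M)\cong \bigoplus_{p+q=i}\Tor_p^A(A/I_A,A)\otimes_\Bbbk \Tor_q^B(B,M_B)$, which vanishes for $i>0$ since $\Tor_p^A(A/I_A,A)=0$ for $p>0$. Combined with the $H_0$ computation, this proves $F_\bullet\ast G_\bullet$ is a multigraded free resolution of $M/IM$.

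The main obstacle I anticipate is the careful verification that the minimal resolutions $F_\bullet$ of $S/I$ and $G_\bullet$ of $M$ can be chosen with differential entries supported on the respective disjoint variable sets, together with the purely combinatorial check that under $j$ the Herzog differential $\A$ and the tensor-complex differential coincide; both reduce to elementary but slightly fiddly identities among $\gcd$, $\mathrm{lcm}$ and the symbols $u_{fg}$, $u_{hfg}$ once the supports are disjoint, so the real work is organizational rather than conceptual.
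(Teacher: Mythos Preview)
Your argument hinges on the claim that every basis element $g$ appearing in the minimal multigraded free resolution $G_\bullet$ of $M$ has its multidegree $u_g$ supported on $\gens(M)$, and consequently that $M$ is extended from the subring $B=\Bbbk[\text{variables outside }\gens(I)]$. This is false. By definition $\gens(M)$ records only the variables occurring in the degrees of a minimal \emph{generating} set of $M$; it says nothing about the syzygies. Take $S=\Bbbk[x_1,x_2]$, $I=(x_1)$, and $M=S/(x_1)$. Then $\gens(I)=\{x_1\}$ while $\gens(M)=\emptyset$ (the single generator of $M$ sits in degree $0$), so the hypothesis $\gens(I)\cap\gens(M)=\emptyset$ holds. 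Yet the minimal resolution of $M$ is $0\to S(-e_1)\to S\to 0$, whose first syzygy has multidegree $x_1\in\gens(I)$. Here $j:F_\bullet\otimes G_\bullet\to F_\bullet\ast G_\bullet$ is \emph{not} an isomorphism (in homological degree $2$ the tensor complex has a summand $S(-2e_1)$ while the $\ast$-complex has $S(-e_1)$), $M$ is not extended from $\Bbbk[x_2]$, and $\Tor_1^S(S/I,M)=S/(x_1)\neq 0$, so $F_\bullet\otimes G_\bullet$ is not acyclic. Nonetheless $F_\bullet\ast G_\bullet$ \emph{is} a resolution of $M/IM$ in this example, as the theorem predicts. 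The same phenomenon occurs whenever $M=S/J$ for a monomial ideal $J$: then $\gens(M)=\emptyset$ automatically, so the theorem applies with no restriction on how $I$ and $J$ overlap, and in general $\Tor_i^S(S/I,S/J)\neq 0$.

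The paper's proof avoids this pitfall by not attempting to identify $F_\bullet\ast G_\bullet$ with a tensor product globally. Acyclicity of $F_\bullet\ast G_\bullet$ in positive degrees is inherited directly from Herzog's spectral-sequence argument in \cite{He} (via the auxiliary complex $\widetilde{G}_\bullet$), which requires no hypothesis relating $I$ and $M$. The disjointness condition $\gens(I)\cap\gens(M)=\emptyset$ is invoked \emph{only} to identify $H_0$: one needs $\gcd(u_{f_1},u_{g_0})=1$ just for $f_1\in B_1$ (degrees of generators of $I$) and $g_0\in C_0$ (degrees of generators of $M$), so that $j$ restricts to an isomorphism on $F_1\ast G_0\oplus S\ast G_1\to S\ast G_0$ and one can read off $H_0(F_\bullet\ast G_\bullet)\cong H_0(F_\bullet\otimes G_\bullet)=M/IM$. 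Your $H_0$ computation is fine; the gap is that your proposed mechanism for higher acyclicity does not work, and you will need Herzog's argument (or something of comparable strength) there.
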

\begin{proof} The proof goes along the same lines as that of \cite[2.1]{He}, we just mention the slight modifications which have to be done. In the first step of the proof, we use polarization for the ideal $I$ and assume that $I$ is squarefree. As well by \cite[Theorem 2.1]{BH2}, the multigraded $S$-module $M$ can be lifted to a multigraded $T$-module $N$ where $T$ is a polynomial ring over $S$, such that all shifts in the multigraded free $T$-resolution of $N$ are squarefree. The shifts of this multigraded free $T$-resolution are of the expected form; so that after specialization, the multigraded  free $T$-resolution becomes the multigraded free $S$-resolution of $M$. Therefore we may assume that $I$ and $M$ have squarefree free resolution. We continue to the proof as in \cite{He}.

Let $S/I$ and $M$ admit minimal multigraded free resolutions $F_{\bullet}:0\to F_p\to\cdots\to F_1\to S \to 0$ and $G_{\bullet}:0\to G_q\to\cdots\to G_1\to G_0 \to 0$, respectively, where $F_i$ , resp. $G_i$, is a multigraded free $S$-module with basis $B_i$, resp. $C_i$,  for all $0 \leq i \leq p$, resp. $0 \leq i \leq q$. The complex $\widetilde{G}_{\bullet}$ arisen from the first spectral sequence of the double complex $F_{\bullet}*G_{\bullet}$ is of the form:
$$\widetilde{G}_{\bullet}:0 \to \bigoplus_{g\in C_q}(S/I_g)g\to \cdots \to \bigoplus_{g\in C_1}(S/I_g)g\to \bigoplus_{g\in C_0}(S/I_g)g \to 0$$
where $I_g$ is an ideal generated by the monomials $[u,u_g]/u_g$ in which $u$ is a member of the generating set of $I$. Here is the point that makes this theorem more general. The fact that $\widetilde{G}_{\bullet}$ is acyclic, \cite{He}, in conjunction with the fact that the second spectral sequence converges shows that to know what is resolved by $F_{\bullet}*G_{\bullet}$ we have to know what $H_0(\widetilde{G}_{\bullet})$ is. We consider  the most right terms of $F_{\bullet}*G_{\bullet}$, that is  $F_1*G_0\bigoplus S*G_0\xrightarrow{\psi}S*G_0\to 0.$ Since $S$ is generated by $1$, the map $j$ induces the isomorphisms $S*G_1\cong S\bigotimes G_1$ and $S*G_0\cong S\bigotimes G_0$. The assumption that $\gens(I)\cap\gens(M)= \emptyset$ implies that the homogenous homomorphism $j:F_1*G_0\to F_1\bigotimes G_0$ is an isomorphism, since $j(f_1*g_0)=\text{gcd}(u_{{f}_1}, u_{{g}_0})f_1\otimes g_0=f_1\otimes g_0$ for all $f_1\in B_1$ and $g_0\in C_0$. Hence we have the following commutative diagram, where $\varphi$ is the map at the beginning of the complex $F_{\bullet}\bigotimes G_{\bullet}$.
$$
 \xymatrix{
 F_1*G_0\bigoplus S*G_1\ar[r]^{\psi}\ar[d]^j_{\cong}&S*G_0\ar[d]^j_{\cong}\\
 F_1\bigotimes G_0\bigoplus S\bigotimes G_1\ar[r]^{\varphi}&S\bigotimes G_0\\
 }
 $$
To see that this diagram is commutative, we just need to verify the image of $f_1*g_0$ for $f_1 \in B_1$ and $g_0 \in C_0$.
$$\psi(f_1*g_0)=a_{{{f}_1}1}u_{{{g}_0}{{f}_1}1}1*g_0=
a_{{{f}_1}1}([u_{g_0}, u_{f_1}]/[u_{g_0},1]) 1*g_0=
a_{{{f}_1}1}u_{f_1}1*g_0,$$
 recall that gcd$(u_{f_1}, u_{g_0})=1$. We then have $j(\psi(f_1*g_0))=a_{{{f}_1}1}u_{f_1}1\otimes g_0=
\varphi(j(f_1*g_0))=\varphi(f_1\otimes g_0)$, which shows that the above diagram is commutative. Therefore, $H_0(F_{\bullet}*G_{\bullet})=H_0(F_{\bullet}\bigotimes G_{\bullet})= S/I\bigotimes _S M\cong M/IM$, as desired.
\end{proof}

Regarding the above theorem, the main theorem of \cite{He} deals with the case where $M=S/J$ and $J$ is a monomial ideal. In this case $\gens(M)=\emptyset$, hence the condition $\gens(M)\cap\gens(I)=\emptyset$ is automatically satisfied and the above argument for determining $H_0(\widetilde{G}_{\bullet})$ becomes vacuous.

With a free resolution of the product in hand, we are now able to give an upper bound for the Castelnuovo-Mumford regularity and the projective dimension of products. For a graded $S$-module $L$, we set $M_i(L)$ to  be the highest shifts appears in the graded minimal $S$-free resolution of $L$. The castelnuovo-Mumford regularity of $M$ is defined as $\Reg(L):=\max\{M_i(L)-i: i \geq 0\}$.

\begin{cor}\label{Creg} Let $I$ be a monomial ideal of $S$ and $M$ be a f.g. multigraded $S$-module such that $\gens(M)\cap\gens(I)=\emptyset$. Then
\begin{itemize}
\item[(a)]$\pd(M/IM)\leq \pd(M)+ \pd(I)+1$; and
\item[(b)]$\Reg(IM)\leq \Reg(I)+\Reg(M).$
\end{itemize}
\end{cor}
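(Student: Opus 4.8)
The proof proposal: everything follows from the existence of the free resolution $F_\bullet * G_\bullet$ of $M/IM$ produced by Theorem~\ref{Tmain}, by reading off the length and the shifts of that complex.

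First I would prove (a). By Theorem~\ref{Tmain}, $F_\bullet * G_\bullet$ is a multigraded free resolution of $M/IM$. By construction $(F_\bullet * G_\bullet)_i = \bigoplus_{j+k=i} F_j * G_k$, so the complex is zero in homological degrees exceeding $p+q$, where $p = \pd(S/I) = \pd(I)+1$ and $q = \pd(M)$. Hence $\pd(M/IM) \le p+q = \pd(I)+\pd(M)+1$, since the projective dimension computed from any free resolution is an upper bound for the one computed from the minimal resolution. (One should note that the minimality of $F_\bullet$ and $G_\bullet$ is not needed for part (a) — only the length matters.)

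For (b), the key observation is about shifts. Each basis element $f*g$ of $F_j * G_k$ has multidegree $[u_f, u_g] = \operatorname{lcm}(u_f, u_g)$, hence ordinary degree $\deg u_f + \deg u_g - \deg \gcd(u_f, u_g)$. The crucial input is the hypothesis $\gens(I) \cap \gens(M) = \emptyset$: I would argue that this forces $\gcd(u_f, u_g) = 1$ for every basis element $f \in B_j$ of the resolution $F_\bullet$ of $S/I$ and every basis element $g \in C_k$ of $G_\bullet$ — because the variables dividing $u_f$ all lie in (the squarefree-polarized analogue of) $\gens(I)$ while those dividing $u_g$ lie in $\gens(M)$, these sets being disjoint. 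Consequently $\deg(f*g) = \deg u_f + \deg u_g$, so the $i$-th shifts of $F_\bullet * G_\bullet$ satisfy $M_i(M/IM) \le \max_{j+k=i}\bigl(M_j(S/I) + M_k(M)\bigr)$. Then
\[
\Reg(M/IM) \le \max_{i}\bigl(M_i(M/IM) - i\bigr)
\le \max_{j,k}\bigl(M_j(S/I) - j + M_k(M) - k\bigr) = \Reg(S/I) + \Reg(M).
\]
Finally I would translate this into the stated form using the standard identities $\Reg(S/I) = \Reg(I) - 1$ and $\Reg(IM) \le \max\{\Reg(M/IM)+1,\ \Reg(M)\}$ coming from the short exact sequence $0 \to IM \to M \to M/IM \to 0$; since $\Reg(M/IM) \le \Reg(I) - 1 + \Reg(M)$, both terms in the max are bounded by $\Reg(I) + \Reg(M)$ (using $\Reg(I) \ge 1$), yielding $\Reg(IM) \le \Reg(I) + \Reg(M)$.

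**Main obstacle.** The one point requiring care is the reduction to the squarefree case and the behaviour of shifts under it. In the proof of Theorem~\ref{Tmain} one polarizes $I$ and lifts $M$ to a module $N$ over a larger polynomial ring $T$ with squarefree resolutions; I must check that the disjointness of gens is preserved by these operations (so that the $\gcd = 1$ argument applies over $T$), and that passing back down by specialization does not increase the relevant shifts — i.e. that regularity and projective dimension are unchanged under polarization and the lifting of \cite{BH2}. These are standard facts, but they are the only step that is not a one-line bookkeeping argument, so that is where I would spend most of the write-up.
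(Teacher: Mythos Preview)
Your overall strategy matches the paper's: read off both bounds from the resolution $F_\bullet * G_\bullet$ of $M/IM$ supplied by Theorem~\ref{Tmain}, then pass from $\Reg(M/IM)$ to $\Reg(IM)$ via the short exact sequence $0 \to IM \to M \to M/IM \to 0$. Part~(a) is exactly as in the paper.

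The gap is in your shift computation for~(b). You claim that $\gens(I)\cap\gens(M)=\emptyset$ forces $\gcd(u_f,u_g)=1$ for \emph{every} basis element $f\in B_j$ and $g\in C_k$. This is false: the hypothesis only controls the variables appearing in the \emph{generators} of $I$ and of $M$ (that is, in $B_1$ and $C_0$), not in the higher syzygies of $M$. Take $I=(x_1)$ and $M=S/(x_1)$: then $\gens(I)=\{x_1\}$ and $\gens(M)=\emptyset$, yet the basis element of $G_1$ has $u_g=x_1$, so $\gcd(u_f,u_g)=x_1$. Both objects are already squarefree, so polarization and the Bruns--Herzog lifting do not rescue the claim; your ``main obstacle'' is unresolvable as stated. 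The fix is that you never needed $\gcd=1$: the trivial inequality $\deg[u_f,u_g]\le\deg u_f+\deg u_g$ already gives
\[
M_i(M/IM)\le\max_{j+k=i}\bigl(M_j(S/I)+M_k(M)\bigr),
\]
and from there your argument goes through verbatim. The hypothesis $\gens(I)\cap\gens(M)=\emptyset$ is consumed entirely inside the proof of Theorem~\ref{Tmain} (to identify $H_0$); once that resolution is in hand, the regularity bound is pure bookkeeping with no further appeal to the hypothesis. This is precisely how the paper argues.
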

\begin{proof}Part (a) is due to the fact that $F_{\bullet}*G_{\bullet}$ is acyclic  and has length $\pd(M)+\pd(S/I)+1$.

For (b), since $F_{\bullet}*G_{\bullet}$ is not probably the minimal free resolution of $M/IM$ we have that $M_i(M/IM)\leq$ the highest shift in $(F_{\bullet}*G_{\bullet})_i$. The highest shift in   $(F_{\bullet}*G_{\bullet})_i$ is less than or equal to $\max\limits_{j+k=i}\{ M_j(M),M_k(S/I)\}$, and so
\begin{align*}
\Reg(M/IM)= & \max\{M_i(M/IM)-i:i \geq 0\} \\
      \leq & \max_{j+k=i}\{ M_j(M)-j,M_k(S/I)-k\}\\
      \leq & \Reg(M)+\Reg(S/I).
\end{align*}

Now, the exact sequence $0\to IM \to M \to M/IM\to 0$ implies that $$\Reg(IM)\leq \max \{\Reg(M),\Reg(M/IM)+1\}\leq \Reg(M)+\Reg(S/I)+1=\Reg(M)+\Reg(I).$$
\end{proof}

One may apply Corollary \ref{Creg} for the case where $M=J$ is a monomial ideal to obtain the formula $\Reg(IJ)\leq \Reg(I)+\Reg(J)$ provided that $\gens(J)\cap\gens(I)=\emptyset$. Although this is the desired formula for the regularity of product of ideals, it is shown in Corollary \ref{Cintersection} of the following general proposition that under the condition $\gens(J)\cap\gens(I)=\emptyset$ one has $IJ=I\cap J$. Hence to make the inequality $\Reg(IJ)\leq \Reg(I)+\Reg(J)$ valuable, we will later reduce the condition on Gens(c.f. Theorem \ref{Tregproduc}).

\begin{prop}\label{Pextended}
Consider the polynomial ring $S=\Bbbk[x_1,\cdots,x_n] $. Let $1 \leq k <n$ be an integer, $R=\Bbbk[x_1,\cdots,x_k]$ and $R'=\Bbbk[x_{k+1,\cdots,x_n}]$. Suppose that $M$ and $N$ are two extended modules, that is, there are graded (not necessarily multigraded) $R$-modules  $M_1$ and $R'$-module $N_1$ such that $M=M_1\otimes_RS$ and $N=N_1\otimes_{R'}S$. Then
\begin{itemize}
\item[(a)] $\Tor_i^S(M,N)=0~~\text{for all }i\geq 1$; and
\item[(b)]$\Reg(M\otimes_SN)\leq \Reg(M)+\Reg(N)$.
\end{itemize}
\end{prop}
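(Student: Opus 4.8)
The plan is to reduce everything to the two polynomial rings $R$ and $R'$ by a base-change argument. First I would observe that since $M = M_1 \otimes_R S$ and $N = N_1 \otimes_{R'} S$, and since $S = R \otimes_\Bbbk R'$, the modules $M$ and $N$ are supported on ``complementary'' sets of variables. For part (a), I would take a graded free resolution $P_\bullet \to M_1$ of $M_1$ over $R$; tensoring with $S$ over $R$ (which is flat, indeed free) gives a graded free resolution $P_\bullet \otimes_R S \to M$ over $S$. Now I compute $\Tor_i^S(M, N)$ using this resolution: $P_\bullet \otimes_R S \otimes_S N \cong P_\bullet \otimes_R N$. Since $N = N_1 \otimes_{R'} S$ and $S$ is free (hence flat) as an $R'$-module, $N$ is a flat $R$-module: indeed $R \to S$ is flat and $N$ is $S$-flat would be too strong, so instead I would argue directly that $N \cong N_1 \otimes_\Bbbk R$ as an $R$-module (using $S \cong R'\otimes_\Bbbk R$ and $N_1 \otimes_{R'}(R' \otimes_\Bbbk R) \cong N_1 \otimes_\Bbbk R$), which is visibly $R$-flat because $N_1$ is a $\Bbbk$-vector space and $R$ is free over $\Bbbk$. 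Therefore $P_\bullet \otimes_R N$ is exact in positive degrees, giving $\Tor_i^S(M,N) = 0$ for $i \geq 1$, and $\Tor_0^S(M,N) = M \otimes_S N \cong M_1 \otimes_\Bbbk N_1$.

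For part (b), with part (a) in hand, the vanishing of higher Tor means that tensoring the two resolutions produces a resolution of the tensor product. More precisely, if $P_\bullet \to M$ and $Q_\bullet \to N$ are the minimal graded free $S$-resolutions, then $\Tor_i^S(M,N) = 0$ for $i \geq 1$ forces the total complex $\Tot(P_\bullet \otimes_S Q_\bullet)$ to be a (not necessarily minimal) graded free resolution of $M \otimes_S N$. Then I would run the same shift-bookkeeping as in the proof of Corollary \ref{Creg}(b): the $i$-th term of the total complex is $\bigoplus_{j+k=i} P_j \otimes_S Q_k$, whose highest shift is at most $\max_{j+k=i}\{M_j(M) + M_k(N)\}$ (here the shifts add, since $\Deg(f\otimes g) = \Deg(u_f u_g) = \Deg(u_f) + \Deg(u_g)$ in the ordinary tensor product). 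Hence
\begin{align*}
\Reg(M\otimes_S N) &= \max\{M_i(M\otimes_S N) - i : i \geq 0\}\\
&\leq \max_{j+k=i}\{M_j(M) + M_k(N) - j - k\}\\
&\leq \Reg(M) + \Reg(N).
\end{align*}

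The main obstacle I anticipate is the flatness claim underlying part (a): one must be careful that the hypothesis only gives $M$ and $N$ as \emph{extended} from $R$ and $R'$ respectively (base-changed up to $S$), so the clean statement ``$N$ is flat over $R$'' needs the explicit identification $S \cong R \otimes_\Bbbk R'$ and $N \cong N_1 \otimes_\Bbbk R$ to be spelled out, together with the observation that this is an isomorphism of \emph{graded} $R$-modules so that the resolution-and-shift argument in part (b) is legitimate. Once the flatness is established correctly, parts (a) and (b) are formal; a secondary point to check is that the total complex of two resolutions of modules with vanishing higher Tor is genuinely acyclic, which is the standard spectral-sequence (or iterated mapping cone) argument and can be cited.
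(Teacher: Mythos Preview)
Your proposal is correct and follows essentially the same route as the paper. For (a) the paper also extends a free $R$-resolution $F_\bullet$ of $M_1$ to $S$ and then collapses the tensor manipulations to $F_\bullet\otimes_\Bbbk N_1$ (equivalently, your observation that $N\cong N_1\otimes_\Bbbk R$ is $R$-flat), concluding via $\Tor_i^{\Bbbk}(M_1,N_1)=0$; for (b) the paper likewise uses the Tor-vanishing to see that the tensor of the two extended resolutions resolves $M\otimes_S N$ and then runs the same shift bookkeeping as in Corollary~\ref{Creg}.
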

\begin{proof}To prove (a), let $F_{\bullet}$ be a $R$-free resolution of $M_1$. $F_{\bullet}\otimes_RS $  provides a $S$-free resolution for $M_1\otimes_RS=M$. To compute $\Tor_i^S(M,N)$, we consider the homology of the complex $(F_{\bullet}\otimes_RS)\otimes_SN $. Considering the natural isomorphisms
$(F_{\bullet}\otimes_RS)\otimes_SN\cong
(F_{\bullet}\otimes_RS)\otimes_S(N_1\otimes_{R'}S)\cong
 F_{\bullet}\otimes_R(N_1\otimes_{R'}S)\cong
 F_{\bullet}\otimes_R(S\otimes_{R'}N_1)\cong
  F_{\bullet}\otimes_R(R\otimes_{\Bbbk}R')\otimes_{R'}N_1)\cong
  F_{\bullet}\otimes_{\Bbbk}N_1 $
  , we have $$\Tor_i^S(M,N)=H_i((F_{\bullet}\otimes_RS)\otimes_SN)=H_i(F_{\bullet}\otimes_{\Bbbk}N_1)=\Tor_i^{\Bbbk}(M_1,N_1)=0$$
  the last equality holds, since $\Bbbk $ is a field.

  For (b), it is enough to notice that if $F_{\bullet}$ and $G_{\bullet}$ are $R$-free resolution and $R'$-free resolution of $M_1$ and $N_1$, respectively, then by part (a)  $F_{\bullet}\otimes_{\Bbbk}G_{\bullet}$ is a $S$-free resolution for $M\otimes_SN$. Now a similar computation as in the proof of the Corollary \ref{Creg}  yields the assertion.
\end{proof}
\begin{cor}\label{Cintersection}Let $I$ and $J$ be two homogeneous ideals of $S$ with $\gens(J)\cap\gens(I)=\emptyset$, then $I\cap J=IJ$.
\end{cor}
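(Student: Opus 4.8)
The plan is to reduce the statement to a single $\Tor$-vanishing and then quote Proposition \ref{Pextended}(a). Recall the classical identification: tensoring the exact sequence $0\to J\to S\to S/J\to 0$ with $S/I$ over $S$, and using $\Tor_1^S(S,S/I)=0$, one obtains $\Tor_1^S(S/I,S/J)\cong\ker\bigl(J/IJ\to S/I\bigr)$, and this kernel is precisely $(I\cap J)/IJ$ (note $IJ\subseteq I\cap J$). Hence $I\cap J=IJ$ is equivalent to $\Tor_1^S(S/I,S/J)=0$, so it suffices to establish the latter.

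Next I would put $S/I$ and $S/J$ into the "extended module" format required by Proposition \ref{Pextended}. Since $\gens(I)$ and $\gens(J)$ are disjoint subsets of $\{x_1,\dots,x_n\}$, after permuting the variables we may assume $\gens(I)\subseteq\{x_1,\dots,x_k\}$ and $\gens(J)\subseteq\{x_{k+1},\dots,x_n\}$ for a suitable $1\le k<n$ (if $\gens(I)\cup\gens(J)$ does not use up all the variables, assign the leftover ones arbitrarily to one of the two blocks; and if $I$ or $J$ equals $0$ or $S$ the assertion is trivial, which also covers the case where one of $\gens(I)$, $\gens(J)$ is empty). Set $R=\Bbbk[x_1,\dots,x_k]$ and $R'=\Bbbk[x_{k+1},\dots,x_n]$. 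Because the minimal generators of $I$ are polynomials in $x_1,\dots,x_k$, they generate an ideal $I_1\subseteq R$ with $I_1S=I$, so $S/I\cong(R/I_1)\otimes_RS$ is extended from $R$; symmetrically $S/J\cong(R'/J_1)\otimes_{R'}S$ is extended from $R'$.

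Now apply Proposition \ref{Pextended}(a) with $M=S/I$ and $N=S/J$: it gives $\Tor_i^S(S/I,S/J)=0$ for all $i\ge 1$, in particular for $i=1$. Combined with the first paragraph, $(I\cap J)/IJ\cong\Tor_1^S(S/I,S/J)=0$, i.e. $I\cap J=IJ$, as wanted.

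There is no serious obstacle here: the only points requiring a bit of care are the bookkeeping when $\gens(I)\cup\gens(J)\ne\{x_1,\dots,x_n\}$ or when one ideal is trivial, and the (routine) check that $S/I=(R/I_1)\otimes_RS$ really meets the hypothesis of Proposition \ref{Pextended}; the substantive content is entirely carried by that proposition.
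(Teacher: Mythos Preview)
Your proof is correct and follows essentially the same route as the paper: both identify $S/I$ and $S/J$ as modules extended from the disjoint subrings $R$ and $R'$, invoke Proposition~\ref{Pextended}(a) to obtain $\Tor_1^S(S/I,S/J)=0$, and conclude via the standard identification $\Tor_1^S(S/I,S/J)\cong(I\cap J)/IJ$. You simply spell out a few details (the $\Tor_1$ identification, the variable bookkeeping, the trivial edge cases) that the paper leaves implicit.
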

\begin{proof}By the same token as Proposition \ref{Pextended}, suppose that $I=I_1S$ and $J=J_1S$ where $I_1$ and $J_1$ are graded ideals of $R$ and $R'$, respectively. Considering the natural maps $I_1\to R$, $J_1\to R'$, $R\otimes_RS\to S$ and $R'\otimes_{R'}S\to S$ and the fact that $S$ is flat over $R$ and $R'$, one sees that $S/I\cong R/I_1\otimes_RS$ and $S/J\cong R'/J_1\otimes_{R'}S$. Now the result follows from Proposition \ref{Pextended} in conjunction with the fact that $\Tor_1^S(S/I,S/J)=I\cap J/IJ$.
\end{proof}


The next theorem demonstrates the inequality $\Reg(IJ)\leq \Reg(I)+\Reg(J)$ for a large class of ideals $I$ and $J$, namely when $\gens(J)\cap\gens(I)$ consists of at most one element. As it is shown in a forehead example this class of ideals is the largest  class satisfies the inequality of regularity in the sense of the number of elements of the set of the intersections of Gens.

\begin{thm}\label{Tregproduc}Let $I$ and $J$ be two homogeneous ideals of $S$ such that $\gens(J)\cap\gens(I)$ consists of at most one element. Then $\Reg(IJ)\leq \Reg(I)+\Reg(J)$
\end{thm}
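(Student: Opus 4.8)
The plan is to reduce the general case to the monomial case already handled by Corollary \ref{Creg} by exploiting the fact that there is essentially only one troublesome variable. Write $\gens(I)\cap\gens(J)\subseteq\{x_1\}$ (if the intersection is empty we are done by Corollary \ref{Creg} applied to the extended modules, or rather by Corollary \ref{Cintersection} together with the known bound for $\Reg(I\cap J)$). After renaming variables we may assume every variable appearing in the minimal generators of $I$ is among $x_1,\dots,x_k$ and every variable appearing in the minimal generators of $J$ is among $x_1,x_{k+1},\dots,x_n$; the only overlap is $x_1$. So $I=I'S$ with $I'\subseteq\Bbbk[x_1,\dots,x_k]=R$ and $J=J'S$ with $J'\subseteq\Bbbk[x_1,x_{k+1},\dots,x_n]=R''$. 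Since regularity, tensor products and $\Tor$ all behave well under the faithfully flat base changes $R\to S$ and $R''\to S$, it suffices to bound things over the ring $\Bbbk[x_1,\dots,x_n]$ built from $R$ and $R''$ amalgamated along $\Bbbk[x_1]$.

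The key idea is to separate the shared variable $x_1$. Introduce a fresh variable $y$ and set $\widetilde{S}=S[y]$. Replace $J$ by its image $\widetilde{J}$ under the ring map sending $x_1\mapsto y$ and fixing the other variables; then $\widetilde{J}$ lives in $\Bbbk[y,x_{k+1},\dots,x_n]$ and $\gens(I)\cap\gens(\widetilde{J})=\emptyset$ inside $\widetilde{S}$. By Corollary \ref{Creg}(b) applied over $\widetilde{S}$ (the hypothesis there is that $I$ be monomial, so in fact I first want to reduce to the monomial situation, see below), or more to the point by Proposition \ref{Pextended}(b) applied to the extended modules $S/I\otimes_R\widetilde{S}$ and $\widetilde{S}/\widetilde{J}$, we get $\Reg_{\widetilde S}(I\widetilde{J})\le \Reg(I)+\Reg(\widetilde{J})=\Reg(I)+\Reg(J)$, since pushing forward along a variable renaming does not change regularity. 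Now $y-x_1$ is a nonzerodivisor on $\widetilde{S}$ and on $\widetilde{S}/(I\widetilde{J})$ — this is the crucial point to verify, using that $y$ does not appear in $I$ and that $\widetilde{J}$ is, up to the renaming, a copy of $J$ in disjoint variables — and modding out by $y-x_1$ recovers $S/(IJ)$. Reduction modulo a linear nonzerodivisor does not increase regularity, so $\Reg(IJ)\le \Reg_{\widetilde S}(I\widetilde J)\le\Reg(I)+\Reg(J)$, and then the short exact sequence $0\to IJ\to S\to S/IJ\to 0$ together with $\Reg(S)=0$ converts the bound on $\Reg(S/IJ)$ into the bound $\Reg(IJ)\le\Reg(I)+\Reg(J)$ in the normalization where we track $S/IJ$; more directly one works with $IJ$ throughout as in Corollary \ref{Creg}.

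To be able to invoke Proposition \ref{Pextended} or Corollary \ref{Creg} I should first reduce to monomial ideals, or at least to ideals that are extended in the strong sense required. The honest route is: the ideals $\widetilde I=I$ (in the variables $x_1,\dots,x_k$) and $\widetilde J$ (in the variables $y,x_{k+1},\dots,x_n$) involve \emph{disjoint} sets of variables, so $S/\widetilde I$ and $\widetilde S/\widetilde J$ are extended from the subrings $R$ and $\Bbbk[y,x_{k+1},\dots,x_n]$ respectively, and Proposition \ref{Pextended}(b) directly gives $\Reg(\widetilde S/(\widetilde I+\widetilde J))\le\Reg(S/I)+\Reg(S/J)$; combined with $\Tor_1=0$ from Proposition \ref{Pextended}(a) this forces $\widetilde I\widetilde J=\widetilde I\cap\widetilde J$ and hence $\Reg(\widetilde S/(\widetilde I\widetilde J))=\Reg(\widetilde S/(\widetilde I+\widetilde J))+\text{(correction)}$ — one uses the standard short exact sequence $0\to \widetilde S/(\widetilde I\cap\widetilde J)\to \widetilde S/\widetilde I\oplus \widetilde S/\widetilde J\to \widetilde S/(\widetilde I+\widetilde J)\to 0$ to get $\Reg(\widetilde I\widetilde J)\le\Reg(\widetilde I)+\Reg(\widetilde J)$. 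I expect the main obstacle to be exactly the claim that $y-x_1$ is a nonzerodivisor on $\widetilde S/(I\widetilde J)$ and that the quotient is $S/(IJ)$: one must check that the specialization $y\mapsto x_1$ carries $\widetilde J$ onto $J$ and that no unexpected cancellation in $I\widetilde J$ occurs, which comes down to a primary-decomposition or associated-primes argument using that $I\widetilde J=I\cap\widetilde J$ over $\widetilde S$ and that neither $I$ nor $\widetilde J$ has $y-x_1$ among the zero-divisors it creates. Everything else is the routine bookkeeping with regularity under flat extensions and modding out linear nonzerodivisors that already appears in the proof of Corollary \ref{Creg}.
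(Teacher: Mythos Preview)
Your approach is correct in outline but genuinely different from the paper's. The paper never introduces an auxiliary variable; instead it writes $I=I_1S$ with $I_1\subseteq R=\Bbbk[x_1,\dots,x_k]$ and $J=J_1S$ with $J_1\subseteq R'=\Bbbk[x_1,x_{k+1},\dots,x_n]$, and computes directly that
\[
\Tor_i^S(I,S/J)\;\cong\;\Tor_i^{\Bbbk[x_1]}(I_1,R'/J_1)
\]
via the identification $S\cong R\otimes_{\Bbbk[x_1]}R'$. Since $\Bbbk[x_1]$ has global dimension $1$, these vanish for $i\ge 2$, and the shift $\Tor_1^{\Bbbk[x_1]}(I_1,R'/J_1)\cong\Tor_2^{\Bbbk[x_1]}(R/I_1,R'/J_1)=0$ handles $i=1$. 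With all $\Tor$'s zero, the tensor of the two resolutions resolves $I/IJ$ and the regularity bound follows exactly as in Corollary~\ref{Creg}. This is short and uses no specialization argument at all.

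Your route---split the shared variable, apply the disjoint case over $\widetilde S=S[y]$, then mod out $y-x_1$---also works, and the nonzerodivisor check you flag as the main obstacle is easy once you use what you already have: since $\gens(I)\cap\gens(\widetilde J)=\emptyset$ you know $I\widetilde J=I\cap\widetilde J$, so $\widetilde S/(I\widetilde J)$ embeds in $\widetilde S/I\oplus\widetilde S/\widetilde J$; on each summand $y-x_1$ is monic in a polynomial variable ($y$ over $R/I_1$, respectively $x_1$ over $R''/\widetilde J_1$) and hence regular. That also forces $I\widetilde J\otimes_{\widetilde S}S\cong IJ$, so the specialization step goes through cleanly. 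What each approach buys: the paper's $\Tor$ computation over $\Bbbk[x_1]$ explains \emph{why} one shared variable is the threshold (global dimension $1$), and it gives the acyclicity of the tensored resolution directly; your polarization-style argument is more geometric and reuses the $\emptyset$-case wholesale, but it hides the role of the single variable inside the nonzerodivisor check. Either way, drop the detour about reducing to monomial ideals---Proposition~\ref{Pextended} already covers arbitrary homogeneous ideals, and Corollary~\ref{Creg} is not needed here.
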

\begin{proof}Set $A:=\gens(J)\cap\gens(I)$. The case where $A=\emptyset$ is an immediate consequence of Proposition \ref{Pextended} or Corollary \ref{Cintersection}.

With no loss of generality, assume that $A=\{x_1\}$, that $I=I_1S$ where $I_1$ is an ideal of $R=\Bbbk[x_1,\cdots,x_k]$ and that $J=J_1S$ where $J_1$ is an ideal of $R'=\Bbbk[x_1,x_{k+1}\cdots,x_n]$. Let $F_{\bullet}$ be a $R$-free resolution of $I_1$ and $G_{\bullet}$ be a $R'$-free resolution of $R'/J_1$. Then  $F_{\bullet}\otimes_RS$ and $G_{\bullet}\otimes_{R'}S$ are $S$-free resolutions of $I$ and $S/J$, respectively. Hence for all integer $i$,
\begin{gather*}
\Tor_i^S(I,S/J)= H_i((F_{\bullet}\otimes_RS)\otimes_S(G_{\bullet}\otimes_{R'}S))\cong H_i((F_{\bullet}\otimes_RS)\otimes_{R'}G_{\bullet})\\
\cong H_i(F_{\bullet}\otimes_R(R\otimes_{\Bbbk[x_1]}R')\otimes_{R'}G_{\bullet})\cong H_i(F_{\bullet}\otimes_{\Bbbk[x_1]}G_{\bullet})=Tor_i^{\Bbbk[x_1]}(I_1,R'/J_1)
\end{gather*}
The fact that $\Bbbk[x_1]$ has global dimension 1 implies the vanishing of these $\Tor$ modules for all $i\geq 2$.
 To see the vanishing of the first $\Tor$ modules, notice that $R$ is a flat $\Bbbk[x_1]$ module, hence the exact sequence $0\to I_1\to R\to R/I_1\to 0$ yields $\Tor_1^{\Bbbk[x_1]}(I_1,R'/J_1)=\Tor_2^{\Bbbk[x_1]}(R/I_1,R'/J_1)=0$.

The vanishing of all $\Tor$ modules shows that $(F_{\bullet}\otimes_RS)\otimes_S(G_{\bullet}\otimes_{R'})S$ is a free resolution of $I\otimes_SS/J=I/IJ$. Now, a similar calculation as in the proof of Corollary \ref{Creg} shows the assertion.
\end{proof}

The next example of Conca and Herzog \cite{CoHe} shows that the inequality $\Reg(IJ)\leq \Reg(I)+\Reg(J)$ is
no longer true if $\gens(J)\cap\gens(I)$ consists of two elements.
\begin{example}\label{Ecounter} Let $R=\Bbbk[x_1,x_2,x_3,x_4]$, $I=(x_2,x_3)$ and $J=(x_1^2x_2,x_1x_2x_3,x_2x_3x_4,x_3x_4^2)$. The minimal free resolution of $I$, $J$ and $IJ$ are $0\to R(-2)\to R^2(-1)\to 0$, $0\to R^3(-4)\to R^4(-3)\to 0$ and $0\to R(-8) \to R^5(-6)\bigoplus R^2(-7)\to R^{10}(-5)\bigoplus R(-6)\to R^8(-4)\to 0$, respectively. Hence, we have $\Reg(I)=1$, $\Reg(J)=3$ and $\Reg(IJ)=5>\Reg(I)+\Reg(J)$.

 Notice that $\gens(I)=\{x_2,x_3\}$ and $\gens(J)=\{x_1,x_2,x_3,x_4\}$, thus $\gens(J)\cap\gens(I)=\{x_2,x_3\}$.
\end{example}


\end{document}